\documentclass{article}
\usepackage{graphicx}
\usepackage{tikz}
\usepackage[margin=1in]{geometry}
\usepackage{amsmath}
\usepackage{amsthm}
\usepackage{amssymb}
\usepackage{hyperref}
\usepackage{algorithm}
\usepackage{algpseudocode}
\theoremstyle{definition}
\newtheorem{definition}{Definition}[section]
\newtheorem{remark}{Remark}[section]

\theoremstyle{plain}
\newtheorem{theorem}{Theorem}[section]
\newtheorem{corollary}[theorem]{Corollary}
\newtheorem{proposition}{Proposition}[section]
\usepackage{float}
\title{A Partition-Based Generating Function for Row-Convex Polyominoes}
\author{Vincenzo M. Scarrica}
\date{Department of Science and Technology, University of Naples Parthenope\\ 
Centro Direzionale Isola C4 - 80143, Naples, Italy\\ 
vincenzomariano.scarrica@collaboratore.uniparthenope.it}

\begin{document}

\maketitle

\section{Abstract}

An alternative generating function is proposed to enumerate row-convex 
polyominoes without internal holes on a discrete grid. The approach is 
based on integer partitions of the total area, where each partition 
corresponds to a sequence of row lengths, and the product of all 
permutations of the parts accounts for all possible horizontal alignments 
of consecutive rows. Summing over the products yields a formula for the 
total number of convex polyominoes of a given size. Numerical examples 
are provided for small areas, and the exact generating function is derived 
via a transfer series argument, establishing the asymptotic growth 
$S(N) \sim A \cdot 2^N \cos(N\theta + \phi)$ with 
$\theta = \arctan(\sqrt{7}/3)$. The method establishes a direct connection 
between integer partitions and polyomino enumeration, offering a simple 
yet effective framework for both exact and asymptotic combinatorial 
analysis. Potential applications include shape priors in discrete image 
analysis, grid-based modeling, and combinatorial generation of convex 
structures.

\section{Introduction}

A polyomino is a finite set of cells (unit squares) on the two-dimensional 
integer lattice $\mathbb{Z}^2$, such that any two cells in the set are 
connected by a path of adjacent cells sharing a common edge. Formally, 
let $P \subset \mathbb{Z}^2$ be a set of $n$ cells. $P$ is said to be 
4-connected if for every pair of cells $c_1, c_2 \in P$, there exists a 
sequence $(c_1 = d_0, d_1, \dots, d_k = c_2)$ with $d_i \in P$ for all 
$i$, such that each $d_i$ shares a side with $d_{i+1}$. This 4-connection 
ensures that the polyomino forms a single connected component without 
isolated cells \cite{golomb1954checker}.

The study of polyominoes encompasses a variety of subclasses, each defined 
by geometric or combinatorial constraints. Among the most studied are 
convex polyominoes, where the intersection with any horizontal or vertical 
line is connected, and more generally row-convex or column-convex 
polyominoes, which satisfy convexity in one direction only 
\cite{Del_Lungo_Duchi_Frosini_Rinaldi_2004}. Directed polyominoes restrict 
growth to cells added along specific directions from a root cell, 
simplifying enumeration. Other classes include $k$-convex polyominoes 
(such as L-convex \cite{CASTIGLIONE20071724} or Z-convex 
\cite{DUCHI200854}), which limit the number of changes in direction along 
monotone paths connecting any two cells, and parallelogram polyominoes 
\cite{DELEST1995503}, bounded by two monotone lattice paths. These 
distinctions are essential, as each class exhibits different combinatorial 
behaviors and challenges, and some also consider rigidity, where rotations 
or reflections yield distinct or equivalent configurations 
\cite{LEROUX1998343}.

Despite significant research, closed-form formulas for counting polyominoes 
are largely unknown \cite{bousquetmelou1999polyominoes}. Existing results 
often rely on recurrence relations or generating functions that enumerate 
polyominoes according to area or perimeter \cite{FERETIC1998173}. Even for 
convex polyominoes without internal holes, explicit closed-form formulas 
remain limited, motivating the search for alternative combinatorial 
approaches.

As a direct application of polyominoes tiling, in computer vision the 
method can be used to estimate or enumerate possible convex pixel masks of 
approximately or exactly $N$ pixels, which is relevant in scenarios with a 
fixed camera and quasi-stationary subjects, such as counting or tracking 
cellular structures and image compression 
\cite{10.1007/978-3-642-10210-3_18}. Beyond vision, convex polyomino 
enumeration may find use in statistical physics for lattice models, where 
the number of connected clusters of fixed size contributes to partition 
function calculations, or in computational chemistry for estimating 
configurations of molecules on discrete grids \cite{guttmann2009history}, 
\cite{gheorghe2008computing}. Further potential applications include tiling 
problems, grid-based procedural generation in computer graphics, and 
symbolic representations in formal languages, where convexity constraints 
simplify combinatorial analysis \cite{golomb1970tiling}.

In this work, an alternative contribution is proposed: a method to 
enumerate convex polyominoes without internal holes using the integer 
partition function, as an alternative to the classical approach introduced 
by Klarner \cite{klarner1965some} and later revisited by Hickerson 
\cite{hickerson1999counting}. Each polyomino is associated with a sequence 
of row lengths forming a partition of its total area, and the product of 
the permutations of the parts accounts for horizontal alignments of 
consecutive rows. Summing over all products provides a combinatorial 
formula for the total number of convex polyominoes of size $n$, together 
with an exact generating function and asymptotic estimate of the growth of 
this function.

The method also highlights the limitations in extending such formulas to 
concave polyominoes. In these cases, each row can contain subpartitions 
with variable spacing, making it difficult to satisfy the 4-connection 
constraint, and thereby complicating any direct combinatorial enumeration. 
Finally, simplifications using reflections and rotations are analyzed to 
reduce the number of distinct configurations, providing a practical 
framework to estimate reductions of complexity in these cases.

\section{Preliminaries}

A polyomino is defined as a finite set of cells (unit squares) on the 
two-dimensional integer lattice $\mathbb{Z}^2$, such that the set is 
4-connected, i.e., any two cells can be joined by a sequence of adjacent 
cells sharing a side. Formally, let $P \subset \mathbb{Z}^2$ be a set of 
$n$ cells. Then $P$ is 4-connected if for every pair $c_1, c_2 \in P$, 
there exists a sequence $(c_1 = d_0, d_1, \dots, d_k = c_2)$ with 
$d_i \in P$ for all $i$, such that $d_i$ and $d_{i+1}$ share a side for 
all $i$.

\begin{figure}[H]
\centering
\begin{tikzpicture}[scale=0.8]
\foreach \x in {0,1,2} {
    \foreach \y in {0,1,2} {
        \draw[fill=yellow!50, draw=black] (\x,\y) rectangle ++(1,1);
    }
}
\draw[fill=yellow!50, draw=black] (0,3) rectangle ++(1,1);
\draw[fill=yellow!50, draw=black] (2,3) rectangle ++(1,1);
\draw[fill=yellow!50, draw=black] (0,-1) rectangle ++(1,1);
\draw[fill=yellow!50, draw=black] (2,-1) rectangle ++(1,1);
\draw[fill=yellow!50, draw=black] (-1,0) rectangle ++(1,1);
\draw[fill=yellow!50, draw=black] (-1,2) rectangle ++(1,1);
\draw[fill=yellow!50, draw=black] (3,0) rectangle ++(1,1);
\draw[fill=yellow!50, draw=black] (3,2) rectangle ++(1,1);
\node at (1, -2) {Convex polyomino not row-convex, 17 cells};
\end{tikzpicture}
\caption{A 17-cell convex polyomino with corner extensions showing lack 
of row-convexity.}
\label{fig:convex_not_row_convex}
\end{figure}

Polyominoes can be classified according to geometric constraints. In this 
work, we focus on the following classes:

\begin{itemize}
    \item \emph{Convex polyominoes}: a polyomino $P$ is convex if the 
    intersection of $P$ with any horizontal or vertical line is connected. 
    Equivalently, for any two cells $c_1, c_2 \in P$, there exists a 
    monotone path contained in $P$ connecting them. In 
    Fig.~\ref{fig:convex_not_row_convex}, a convex polyomino example with 
    17 cells is shown.
    \item \emph{Row-convex / Column-convex polyominoes}: a polyomino is 
    row-convex (column-convex) if each row (column) contains a contiguous 
    sequence of cells. Even though the polyomino in 
    Fig.~\ref{fig:convex_not_row_convex} is convex, it is not row-convex 
    or column-convex.
    \item \emph{Polyominoes without holes}: $P$ is simply connected if 
    every cell in the bounding rectangle that lies between two cells of $P$ 
    is also in $P$, ensuring no internal voids.
\end{itemize}

Figure~\ref{fig:tetrominoes} illustrates examples of polyominoes of area 
4, highlighting convex shapes of fixed size and showing how row and column 
convexity appear in simple configurations.

\begin{figure}[H]
\centering
\begin{tikzpicture}[scale=0.8]
\draw[fill=blue!30] (0,0) rectangle (1,1);
\draw[fill=blue!30] (1,0) rectangle (2,1);
\draw[fill=blue!30] (2,0) rectangle (3,1);
\draw[fill=blue!30] (3,0) rectangle (4,1);
\node at (2,-0.5) {I};
\draw[fill=red!30] (5,0) rectangle (6,1);
\draw[fill=red!30] (6,0) rectangle (7,1);
\draw[fill=red!30] (5,1) rectangle (6,2);
\draw[fill=red!30] (6,1) rectangle (7,2);
\node at (6,-0.5) {O};
\draw[fill=orange!30] (8,0) rectangle (9,1);
\draw[fill=orange!30] (9,0) rectangle (10,1);
\draw[fill=orange!30] (10,0) rectangle (11,1);
\draw[fill=orange!30] (8,1) rectangle (9,2);
\node at (9.5,-0.5) {L};
\draw[fill=green!30] (12,0) rectangle (13,1);
\draw[fill=green!30] (13,0) rectangle (14,1);
\draw[fill=green!30] (14,0) rectangle (15,1);
\draw[fill=green!30] (13,1) rectangle (14,2);
\node at (13.5,-0.5) {T};
\draw[fill=purple!30] (17,0) rectangle (18,1);
\draw[fill=purple!30] (18,0) rectangle (19,1);
\draw[fill=purple!30] (16,1) rectangle (17,2);
\draw[fill=purple!30] (17,1) rectangle (18,2);
\node at (17.5,-0.5) {S};
\end{tikzpicture}
\caption{The five tetrominoes (polyominoes of area 4) commonly known from 
Tetris: I, O, L, T, and S.}
\label{fig:tetrominoes}
\end{figure}

We denote by $S(n)$ the total number of row-convex polyominoes without 
holes of area $n$. Later, we will introduce a combinatorial formula for 
$S(n)$ based on integer partitions and products of row lengths, providing 
both exact enumeration for small $n$ and an asymptotic estimate for 
large $n$.

\begin{figure}[H]
\centering
\begin{tikzpicture}[scale=0.8]
\foreach \x in {0,1,2}{
    \foreach \y in {0,1,2}{
        \ifnum\x=1
            \ifnum\y=1
            \draw[fill=white] (\x,\y) rectangle ++(1,1);
            \else
            \draw[fill=cyan!30] (\x,\y) rectangle ++(1,1);
            \fi
        \else
        \draw[fill=cyan!30] (\x,\y) rectangle ++(1,1);
        \fi
    }
}
\node at (1.5,-0.5) {Area 8, with internal hole};
\end{tikzpicture}
\caption{Example of the first polyomino with an internal hole (area 8).}
\label{fig:polyomino_hole}
\end{figure}

\textbf{Remark.} Convex polyominoes exist only for $N \le 7$. From 
$N = 8$, concavity can appear, and Figure~\ref{fig:polyomino_hole} shows 
the first example of a polyomino with an internal hole.

\begin{figure}[H]
\centering
\begin{tikzpicture}[scale=0.5]
\begin{scope}[shift={(0,0)}]
\foreach \x/\ylist in {0/{0,1,2},1/{0,1,2,3},2/{-1,0,1,2,3},3/{0,1,2,3},4/{0,1,2},5/{0,1,2,3},6/{0,1,2},7/{0,1}}{
    \foreach \y in \ylist {
        \draw[fill=cyan!30] (\y,-\x) rectangle ++(1,1);
    }
}
\node at (2,-9.5) {Convex, area 20};
\end{scope}
\begin{scope}[shift={(14,0)}]
\foreach \x/\ylist in {0/{-1,0,1},1/{0,1,2,3},2/{0,2,3},3/{-1,0,1,3,4},4/{0,1,2,4},5/{0,2,3},6/{0,1,2},7/{0,1}}{
    \foreach \y in \ylist {
        \draw[fill=cyan!30] (\y,-\x) rectangle ++(1,1);
    }
}
\node at (4,-9.5) {Concave, area 20, with holes};
\end{scope}
\end{tikzpicture}
\caption{Comparison of two polyominoes of area 20: left, convex without 
holes; right, concave with holes.}
\label{fig:convex_vs_concave_frastagliated}
\end{figure}

\noindent
\textbf{Comment.} Figure~\ref{fig:convex_vs_concave_frastagliated} 
compares two polyominoes of area 20. The left polyomino is convex: all 
rows and columns contain contiguous blocks without internal holes. The 
right polyomino is concave and contains $4$ internal holes, clearly 
showing non-contiguous rows and concave regions.

\begin{definition}[Partition Function]
Let $N$ be a positive integer. A \emph{partition} of $N$ is a sequence 
of positive integers $(\lambda_1, \lambda_2, \dots, \lambda_k)$ such that
\[
\lambda_1 \ge \lambda_2 \ge \dots \ge \lambda_k \ge 1 \quad \text{and} 
\quad \sum_{i=1}^{k} \lambda_i = N.
\]
The \emph{partition function} $P(N)$ counts the total number of such 
partitions of $N$ \cite{erdos1941distribution}.
\end{definition}

\begin{remark}
By convention, we set $P(1) = 1$, representing the unique partition of 1.
\end{remark}

\begin{proposition}[Recurrence Relation]
For $N \ge 2$, the partition function satisfies the recurrence
\[
P(N) = \sum_{K=1}^{N-1} P(K)\,P(N-K).
\]
\end{proposition}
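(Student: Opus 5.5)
The plan is a \emph{first-cut decomposition}: split the objects counted by $P(N)$ according to where the first part ends. Each object is cut into a left piece of size $K$ and a right piece of size $N-K$, with $1\le K\le N-1$. The product $P(K)\,P(N-K)$ should then count the objects with first cut at $K$, and summing over $K$ gives the stated recurrence. The base case is the convention $P(1)=1$ of the preceding remark. Concretely, I would carry this out in three steps.

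\begin{enumerate}
\item For each $N\ge 2$ and each object of size $N$, single out a distinguished cut position $K\in\{1,\dots,N-1\}$.
\item Show that the object is determined by, and determines, an ordered pair consisting of an object of size $K$ and an object of size $N-K$.
\item Conclude that the fibres over different $K$ are disjoint and exhaust all objects of size $N$, and sum over $K$.
\end{enumerate}

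The substance is step 2, namely building a bijection $\Phi_K$ from the objects of size $N$ with first cut at $K$ to the pairs of smaller objects, and checking both injectivity and surjectivity.

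The main obstacle is that this bijection does not exist for $P$ read literally as the classical count of partitions in the Definition. A partition is an unordered multiset of parts, so there is no canonical ``first cut'', and gluing two partitions of $K$ and $N-K$ loses the information of where the seam was. The smallest case already shows this. For $N=2$ the recurrence gives $P(1)P(1)=1$, while the partitions of $2$ are $(2)$ and $(1,1)$, so $P(2)=2$. The first thing I would do is therefore this check at $N=2,3$. The sequence forced by the recurrence together with $P(1)=1$ is $1,1,2,5,14,\dots$, whereas the classical values are $1,2,3,5,7,\dots$. The statement can therefore only be proved once the objects counted carry the ordered, hierarchical structure that the first-cut argument needs.

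Accordingly, I would make precise the reading under which the statement holds. Read a ``partition'' of $N$ in the recursive sense: either the trivial block $N$ when $N=1$, or, for $N\ge2$, an ordered split $N=K+(N-K)$ with each side again partitioned recursively. For these objects $\Phi_K$ is literally ``remove the root split'', and it is a bijection by construction, so the recurrence follows by induction on $N$. I would conclude by noting that this structure is exactly full binary bracketings of $N$ leaves, counted by the Catalan number $\frac{1}{N}\binom{2N-2}{N-1}$. The honest content of the proof is then the identification of which combinatorial class the paper's $P(N)$ must denote for the proposition to hold, since for the integer-partition function of the Definition it fails at $N=2$.
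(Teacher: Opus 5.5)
Your verdict is correct. With $P$ the classical partition function fixed by the paper's Definition (non-increasing sequences, $P(1)=1$), the identity already fails at $N=2$: $P(2)=2$ counts $(2)$ and $(1,1)$, while the right-hand side is $P(1)^2=1$. The paper gives no proof of this proposition at all. It is stated without argument and never used again, since the enumeration theorem and the generating-function derivation work directly with partitions and compositions. So there is nothing to compare against, and your $N=2$ check is a complete refutation.

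The failure is also not isolated. For $N\ge 3$, the two terms $K=1$ and $K=N-1$ alone make the right-hand side at least $2P(N-1)$. On the other hand, $P(N)<2P(N-1)$ for $N\ge 3$:
\begin{itemize}
\item partitions of $N$ that contain a part $1$ are in bijection with partitions of $N-1$ (delete one $1$);
\item partitions of $N$ with no part $1$ inject into partitions of $N-1$ by lowering the smallest part by one, and this injection misses $(1,\dots,1)$.
\end{itemize}
Hence the identity is false for every $N\ge 2$.

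Your reinterpretation is a true statement, but present it as a replacement, not as a proof of the proposition. Read as a definition together with $P(1)=1$, the recurrence forces $P(N)=C_{N-1}$, the Catalan numbers. So the only way to make it hold is to change what $P$ counts to a Catalan family. Neither the Definition's partitions nor the compositions used later in the paper qualify; compositions number $2^{N-1}$, which already gives $2\neq 1$ at $N=2$.

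The recurrence also forces $\sum_N P(N)x^N=\frac{1-\sqrt{1-4x}}{2}$, hence growth of order $4^N N^{-3/2}$. This contradicts the paper's own later remark that $P(N)$ grows sub-exponentially like $\exp(\pi\sqrt{2N/3})$. That inconsistency confirms the intended $P$ is the classical one. The proposition should simply be retracted, or replaced by a genuine recurrence such as Euler's pentagonal-number recurrence.
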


\begin{figure}[H]
\centering
\begin{tikzpicture}[scale=0.8]
\begin{scope}[shift={(0,0)}]
\foreach \y in {0,1,2,3,4,5} {
    \draw[fill=cyan!30, draw=black] (\y,0) rectangle ++(1,1);
}
\node at (2.5,-1.2) {A) Maximum-width 6-polyomino};
\end{scope}
\begin{scope}[shift={(8,0)}]
\foreach \x in {0,1} {
    \draw[fill=red!50, draw=black] (\x,0) rectangle ++(1,1);
}
\draw[fill=green!50, draw=black] (2,0) rectangle ++(1,1);
\foreach \x in {3,4} {
    \draw[fill=blue!50, draw=black] (\x,0) rectangle ++(1,1);
}
\draw[fill=magenta!50, draw=black] (5,0) rectangle ++(1,1);
\node at (3,-1.2) {B) Partitions 2+1+2+1};
\end{scope}
\begin{scope}[shift={(18,0)}]
\foreach \x in {0,1} {
    \draw[fill=red!50, draw=black] (\x-1,0) rectangle ++(1,1);
}
\draw[fill=green!50, draw=black] (0,-1) rectangle ++(1,1);
\foreach \x in {0,1} {
    \draw[fill=blue!50, draw=black] (\x,-2) rectangle ++(1,1);
}
\draw[fill=magenta!50, draw=black] (0,-3) rectangle ++(1,1);
\node at (0.5,-4.2) {C) Stacked polyomino};
\end{scope}
\end{tikzpicture}
\caption{Applying a partition to a polyomino. A: original 6-cell row; 
B: split into partitions $2+1+2+1$; C: stacked blocks form the polyomino.}
\label{fig:polyomino_partition_example}
\end{figure}

\begin{definition}[Shift of Two Rows]
Let $R_1$ and $R_2$ be two horizontal polyominoes of lengths $L_1$ and 
$L_2$. The \emph{shift} of $R_2$ over $R_1$ consists of all positions 
such that at least one square of $R_2$ touches a square of $R_1$. The 
number of possible shifts is
\[
S_2 = L_1 + L_2 - 1.
\]
\end{definition}

\begin{figure}[H]
\centering
\begin{tikzpicture}[scale=0.8]
\begin{scope}[shift={(0,0)}]
\foreach \x in {0,1,2,3} { \draw[fill=cyan!30, draw=black] (\x,0) rectangle ++(1,1); }
\foreach \x in {0,1,2} { \draw[fill=red!50, draw=black] (\x-2,1) rectangle ++(1,1); }
\node at (1.5,-0.8) {Original};
\end{scope}
\draw[->, thick] (6,0.5) -- (8,0.5) node[midway, above] {Shift};
\begin{scope}[shift={(10,0)}]
\foreach \x in {0,1,2,3} { \draw[fill=cyan!30, draw=black] (\x,0) rectangle ++(1,1); }
\foreach \x in {0,1,2} { \draw[fill=red!50, draw=black] (\x+3,1) rectangle ++(1,1); }
\node at (1.5,-0.8) {Shifted};
\end{scope}
\end{tikzpicture}
\caption{Two-row shift. Number of possible shifts: $S_2 = 4 + 3 - 1 = 6$.}
\label{fig:two_row_shift_correct}
\end{figure}

\noindent
\textbf{Comment:} The bottom row has length $L_1 = 4$ and the top row 
$L_2 = 3$, giving exactly $6$ positions, confirming 
$S_2 = L_1 + L_2 - 1 = 6$.

\begin{definition}[Shift of Three Rows]
Let $R_1$, $R_2$, $R_3$ have lengths $L_1$, $L_2$, $L_3$. We fix the 
first row $R_1$ and shift $R_2$ over it. Then, for each configuration of 
$R_2$, we shift $R_3$ over the effective base formed by $R_1$ and $R_2$. 
The total number of maximal shifts is
\[
S_3 = (L_1 + L_2 - 1) \cdot (L_2 + L_3 - 1).
\]
\end{definition}

\begin{figure}[H]
\centering
\begin{tikzpicture}[scale=0.8]
\begin{scope}[shift={(0,0)}]
\foreach \x in {0,1,2,3} { \draw[fill=cyan!30, draw=black] (\x,0) rectangle ++(1,1); }
\foreach \x in {0,1,2} { \draw[fill=red!50, draw=black] (\x-2,1) rectangle ++(1,1); }
\foreach \x in {0,1} { \draw[fill=green!50, draw=black] (\x-3,2) rectangle ++(1,1); }
\node at (1.5,-0.8) {Original};
\end{scope}
\draw[->, thick] (6,1) -- (8,1) node[midway, above] {Shift Rows 2 \& 3};
\begin{scope}[shift={(10,0)}]
\foreach \x in {0,1,2,3} { \draw[fill=cyan!30, draw=black] (\x,0) rectangle ++(1,1); }
\foreach \x in {3,4,5} { \draw[fill=red!50, draw=black] (\x,1) rectangle ++(1,1); }
\foreach \x in {5,6} { \draw[fill=green!50, draw=black] (\x,2) rectangle ++(1,1); }
\node at (5,-0.8) {Shifted};
\end{scope}
\end{tikzpicture}
\caption{Three-row maximal shift. Total configurations: 
$(L_1+L_2-1)\cdot(L_2+L_3-1)=6\cdot4=24$.}
\label{fig:three_row_shift_precise}
\end{figure}

\noindent
\textbf{Comment:} Bottom row $L_1=4$, middle row $L_2=3$ gives 
$L_1+L_2-1=6$ positions; top row $L_3=2$ gives $L_2+L_3-1=4$ positions; 
total: $S_3 = 6\cdot 4 = 24$.

\begin{definition}[Generalized Shift for $r$ Rows]
Let $R_1,\dots,R_r$ be $r$ horizontal polyominoes. The total number of 
maximal configurations is
\[
S_r = \prod_{i=1}^{r-1} (L_i + L_{i+1}-1).
\]
\end{definition}

\begin{definition}[Permutation of Polyomino Rows]
Let $P$ be a row-convex polyomino composed of $r$ horizontal rows 
$(R_1, R_2, \dots, R_r)$. A \emph{permutation of the rows of $P$} is any 
reordering $(R_{\sigma(1)}, R_{\sigma(2)}, \dots, R_{\sigma(r)})$ where 
$\sigma$ is a permutation of $\{1,2,\dots,r\}$. Each permutation produces 
a new configuration preserving row lengths but possibly changing the 
overall shape.
\end{definition}

\begin{figure}[H]
\centering
\begin{tikzpicture}[scale=0.8]
\begin{scope}[shift={(0,0)}]
\foreach \x in {0,1,2,3} { \draw[fill=cyan!30, draw=black] (\x,0) rectangle ++(1,1); }
\foreach \x in {0,1,2} { \draw[fill=red!50, draw=black] (\x,1) rectangle ++(1,1); }
\foreach \x in {0,1} { \draw[fill=green!50, draw=black] (\x+1,2) rectangle ++(1,1); }
\node at (1.5,-0.8) {Original};
\end{scope}
\draw[->, thick] (6,1) -- (8,1) node[midway, above] {Permute Rows};
\begin{scope}[shift={(10,0)}]
\foreach \x in {0,1,2} { \draw[fill=red!50, draw=black] (\x,0) rectangle ++(1,1); }
\foreach \x in {0,1,2,3} { \draw[fill=cyan!30, draw=black] (\x,1) rectangle ++(1,1); }
\foreach \x in {0,1} { \draw[fill=green!50, draw=black] (\x+1,2) rectangle ++(1,1); }
\node at (1.5,-0.8) {Permuted};
\end{scope}
\end{tikzpicture}
\caption{Permutation of the rows of a row-convex polyomino. Left: rows of 
lengths $4,3,2$. Right: permuted order $3,4,2$.}
\label{fig:polyomino_row_permutation}
\end{figure}

\section{Enumeration of Convex Polyominoes with Distinct Permutations}

\begin{definition}[Partition-induced sequences with distinct permutations]
Let $N \in \mathbb{N}_{\ge 1}$ and $\mathcal{P}(N)$ be the set of integer 
partitions
\[
\boldsymbol{\lambda} = (\lambda_1, \lambda_2, \dots, 
\lambda_{\ell(\boldsymbol{\lambda})}), \quad 
\sum_{i=1}^{\ell(\boldsymbol{\lambda})} \lambda_i = N.
\]
For each partition $\boldsymbol{\lambda}$, define the 
\emph{permutation factor} $\Phi(\boldsymbol{\lambda})$ as
\[
\Phi(\boldsymbol{\lambda}) =
\begin{cases}
1, & \text{if all parts of } \boldsymbol{\lambda} \text{ are equal},\\[1mm]
\dfrac{\ell(\boldsymbol{\lambda})!}{\prod_v m_v!}, & \text{otherwise,}
\end{cases}
\]
where $m_v$ is the multiplicity of value $v$ in $\boldsymbol{\lambda}$.
\end{definition}

\begin{theorem}[Enumeration Formula]
\label{teo:teorema}
The number of row-convex polyominoes with $N$ cells is
\[
S(N) = \sum_{\boldsymbol{\lambda} \in \mathcal{P}(N)} 
\Phi(\boldsymbol{\lambda}) \prod_{i=1}^{\ell(\boldsymbol{\lambda})-1} 
(\lambda_i + \lambda_{i+1} - 1).
\]
\end{theorem}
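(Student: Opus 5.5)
The plan is to set up a bijection between row-convex polyominoes of area $N$, taken up to translation, and pairs consisting of a sequence of row lengths together with a choice of admissible horizontal shifts. I would then group the row-length sequences according to the partition $\boldsymbol{\lambda}\in\mathcal{P}(N)$ they rearrange, and show that each group contributes $\Phi(\boldsymbol{\lambda})\prod_{i}(\lambda_i+\lambda_{i+1}-1)$.

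\textbf{Step 1 (decomposition).} A row-convex polyomino $P$ meets each horizontal line in a single contiguous segment. Reading these segments from bottom to top gives a sequence $(c_1,\dots,c_r)$ of positive integers with $\sum c_i=N$, i.e.\ a composition of $N$. Normalising the bottom row to start at column $0$, $P$ is determined by the left endpoints of rows $2,\dots,r$. 4-connectivity holds exactly when each pair of consecutive rows shares at least one column. For fixed $c_i$, the left endpoint of row $i+1$ then ranges over $c_i+c_{i+1}-1$ values, and this range does not depend on where the earlier rows sit. This is the content of the Definition of the shift of two rows, iterated as in the Definition of the generalized shift for $r$ rows. Hence the compositions with given row-length sequence number exactly $S_r=\prod_{i=1}^{r-1}(c_i+c_{i+1}-1)$, and $S(N)=\sum_{\mathbf c}\prod_i(c_i+c_{i+1}-1)$, the sum running over all compositions $\mathbf c$ of $N$. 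In particular no polyomino is counted twice, since the row lengths and offsets are recovered from $P$.

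\textbf{Step 2 (grouping by partition).} Every composition is a rearrangement of a unique partition $\boldsymbol{\lambda}$. The distinct rearrangements of $\boldsymbol{\lambda}$ number $\ell(\boldsymbol{\lambda})!/\prod_v m_v!$, which is the permutation factor $\Phi(\boldsymbol{\lambda})$ of the Definition; when all parts are equal it is $1$ under either branch. Using the Definition of permutation of polyomino rows, the sum over compositions becomes a sum over $\boldsymbol{\lambda}$ of the sum of $S_r$ over the $\Phi(\boldsymbol{\lambda})$ distinct orderings of its parts.

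\textbf{Step 3 (main obstacle).} To reach the stated formula I must show that $\prod_i(c_i+c_{i+1}-1)$ takes the same value for every rearrangement $\mathbf c$ of $\boldsymbol{\lambda}$, namely the value in the sorted order. I expect this to be the crux, and also the point where the argument can fail. The product depends on which parts are adjacent, and adjacency changes under permutation. I would first test it at $N=4$ with $\boldsymbol{\lambda}=(2,1,1)$. The orders $2,1,1$ and $1,1,2$ give $2\cdot 1=2$, but $1,2,1$ gives $2\cdot 2=4$. The group therefore contributes $8$, whereas the formula gives $\Phi\cdot 2=6$. This makes the total $17$ rather than the true count $19$.

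So the strategy would be to look for a reading of ``shift'' or of $\Phi$ that rescues the invariance, for example replacing the sorted product by an average of the products over the orderings. Absent such a reading, the honest conclusion of the plan is that Steps 1--2 establish the exact identity $S(N)=\sum_{\boldsymbol{\lambda}}\sum_{\mathbf c\sim\boldsymbol{\lambda}}\prod_i(c_i+c_{i+1}-1)$. The displayed formula agrees with it only when all orderings of $\boldsymbol{\lambda}$ yield the same product, for instance when $\boldsymbol{\lambda}$ has at most two parts or all parts equal. In general the displayed formula is only a lower-bound approximation.
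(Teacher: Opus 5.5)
Your Step 3 is correct, and what it exposes is an error in the statement and in the paper's proof, not a gap in your plan. Read literally, with $\boldsymbol{\lambda}$ in non-increasing order, the displayed formula gives $17$, $53$, $154$ for $N=4,5,6$, while the true counts are $19$, $61$, $196$. The paper's argument fails exactly where you predicted. Its step (2) says the reorderings are ``captured by'' $\Phi(\boldsymbol{\lambda})$, and its step (4) multiplies a single shift product by $\Phi(\boldsymbol{\lambda})$. This tacitly assumes $\prod_i(c_i+c_{i+1}-1)$ is the same for every rearrangement $\mathbf c$ of $\boldsymbol{\lambda}$, and your comparison of $(2,1,1)$ with $(1,2,1)$ refutes that.

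The paper never actually uses the formula as displayed. The subsection ``Reduction to ordered compositions'' declares the theorem ``equivalent'' to the composition sum $\sum_{\mathbf c}\prod_i(c_i+c_{i+1}-1)$. The paper's algorithm evaluates the product separately on each distinct permutation. So the table and the generating function are about the composition sum. Your Steps 1--2 are a complete proof of that corrected identity. They also supply what the paper's step (4) only asserts (``without overcounting''): an explicit bijection between polyominoes and pairs (row-length sequence, relative offsets).

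Two claims in your conclusion go beyond what you have shown. First, the displayed formula is not a lower bound partition by partition. For $\boldsymbol{\lambda}=(2,2,1)$ it gives $3\cdot(3\cdot 2)=18$, while the orderings $(2,2,1),(2,1,2),(1,2,2)$ contribute $6+4+6=16$. At $N=5$ this excess is outweighed by the deficits on $(3,1,1)$ and $(2,1,1,1)$, so a lower bound for the totals $S(N)$ would need its own argument. Second, invariance of the product under reordering is sufficient for a partition's contribution to be correct. It is not the exact condition, which is that the sorted product equal the average of the products over the $\Phi(\boldsymbol{\lambda})$ orderings.

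Two small wording fixes in Step 1. It is the polyominoes with a given row-length sequence, not the compositions, that number $\prod_i(c_i+c_{i+1}-1)$. And it is the number of admissible offsets of row $i+1$ relative to row $i$, not their range, that is independent of the earlier rows.
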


\begin{proof}
\textbf{(1) Row decomposition.} Every row-convex polyomino decomposes into 
a sequence of contiguous rows with lengths $\boldsymbol{\lambda}$.

\textbf{(2) Distinct row orderings.} Permuting rows with distinct lengths 
produces a different polyomino, whereas permuting identical rows does not. 
This is captured by $\Phi(\boldsymbol{\lambda})$.

\textbf{(3) Horizontal shifts.} For each consecutive pair 
$(\lambda_i, \lambda_{i+1})$, there are $\lambda_i + \lambda_{i+1} - 1$ 
independent horizontal alignments.

\textbf{(4) Total enumeration.} Multiplying the shift product by 
$\Phi(\boldsymbol{\lambda})$ gives all configurations corresponding to 
$\boldsymbol{\lambda}$. Summing over all $\boldsymbol{\lambda} \in 
\mathcal{P}(N)$ enumerates all row-convex polyominoes without overcounting.
\end{proof}

\begin{corollary}[Row- vs Column-Convex Polyominoes]
\label{cor:rowcol}
$S_{\mathrm{row}}(N) = S_{\mathrm{col}}(N)$, and the correspondence is 
established by a rotation of $90^\circ$.
\end{corollary}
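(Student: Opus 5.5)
We prove Corollary~\ref{cor:rowcol} with an explicit bijection induced by the lattice rotation, without using the formula of Theorem~\ref{teo:teorema}. Let $\rho:\mathbb{Z}^2\to\mathbb{Z}^2$ be the $90^\circ$ rotation $\rho(x,y)=(-y,x)$, acting on cells through their lower-left corners (equivalently, rotating each unit square about the origin and re-indexing). For a polyomino $P$ define $\rho(P)=\{\rho(c): c\in P\}$. We show that $\rho$ maps the row-convex polyominoes of area $N$ bijectively onto the column-convex polyominoes of area $N$, which gives $S_{\mathrm{row}}(N)=S_{\mathrm{col}}(N)$.

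\textbf{Step 1 (well-definedness).} $\rho$ is an isometry of the lattice, so it is injective on cells and $|\rho(P)|=|P|=N$. It preserves edge-adjacency, since two cells share a side exactly when their coordinates differ by a unit vector, and $\rho$ permutes the unit vectors $\{\pm e_1,\pm e_2\}$. Hence a 4-connecting path in $P$ maps to a 4-connecting path in $\rho(P)$, and $\rho(P)$ is a polyomino.

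\textbf{Step 2 (rows become columns).} The row $\{y=k\}$ of $P$ is sent by $\rho$ into the column $\{x=-k\}$, and a set of consecutive $x$-values $a,\dots,b$ becomes the consecutive set of $y$-values $a,\dots,b$ in that column. So every row of $P$ is contiguous if and only if every column of $\rho(P)$ is contiguous. The hole-free condition, stated via the bounding rectangle, is also preserved, because $\rho$ maps bounding rectangles to bounding rectangles and preserves betweenness.

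\textbf{Step 3 (bijection).} $\rho^{-1}=\rho^3$ is again a lattice rotation. By the same two steps with the roles of rows and columns exchanged, $\rho^{-1}$ sends column-convex polyominoes to row-convex ones. Thus $\rho$ is a bijection between the two classes at each area $N$.

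The main obstacle is the counting convention. The paper's $S(N)$ is meant to count fixed polyominoes up to translation. Since $\rho$ commutes with translations up to a translation ($\rho(P+v)=\rho(P)+\rho(v)$), it descends to a bijection on translation classes, and I would state this explicitly. Rotations are not quotiented out, so distinct translation classes remain distinct after applying $\rho$, and no overcounting or collapsing occurs.
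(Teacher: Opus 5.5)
Your proof is correct and follows the same route as the paper: the $90^\circ$ rotation maps row-convex polyominoes to column-convex ones, and the opposite rotation inverts it. You use the counterclockwise rotation where the paper uses clockwise, which is immaterial, and you spell out what the paper only asserts: preservation of adjacency and contiguity, and the descent to translation classes. One harmless nit: rotating the unit square with lower-left corner $(x,y)$ about the origin gives lower-left corner $(-y-1,x)$, not $(-y,x)$, so your ``equivalently'' holds only up to a translation, which does not matter since you count up to translation.
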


\begin{proof}
The map $\Psi$ defined by $90^\circ$ clockwise rotation sends row-convex 
polyominoes to column-convex polyominoes. It is well-defined (contiguous 
rows become contiguous columns), injective (distinct polyominoes rotate to 
distinct ones), and surjective (rotating $90^\circ$ counterclockwise 
inverts it). Hence $\Psi$ is a bijection.
\end{proof}

\section{Symmetry and Partition Reduction}

A natural question is whether we can reduce the total count by identifying 
configurations that are mirror images of each other. Given a row-convex 
polyomino, its reflection with respect to the vertical axis produces 
another valid polyomino \cite{LEROUX1998343}.

\begin{itemize}
    \item If all rows have even length, every reflected polyomino is 
    distinct from its original, and the total count can be exactly halved.
    \item If two contiguous rows have mixed parity, the reflection still 
    produces a distinct polyomino, and the halving argument approximately 
    holds.
    \item If two rows are of odd length, the number of admissible 
    horizontal shifts is odd, and there exists exactly one polyomino 
    symmetric under reflection.
\end{itemize}

\noindent Since it is difficult to determine from the enumeration formula 
which sequences correspond to symmetric polyominoes, we bound the number 
of distinct polyominoes up to reflection:
\[
\frac{S(N)}{2} \le S_{\text{distinct}}(N) \le 
\sum_{\boldsymbol{\lambda} \in \mathcal{P}(N)} 
\Phi(\boldsymbol{\lambda}) \prod_{i=1}^{\ell(\boldsymbol{\lambda})-1} 
(\lambda_i + \lambda_{i+1}),
\]
where the upper bound follows by replacing each factor 
$(\lambda_i + \lambda_{i+1} - 1)$ with $(\lambda_i + \lambda_{i+1})$, 
which accounts for the at most one symmetric configuration per pair of 
odd-length rows.

\section{Generating Function and Exact Asymptotics}

We now derive the generating function of $S(N)$ directly from 
Theorem~\ref{teo:teorema} and use it to establish the exact asymptotic 
behavior of the sequence.

\subsection{Reduction to ordered compositions}

The key observation is that every distinct permutation of a partition is 
an ordered composition, and every ordered composition appears exactly once 
among the distinct permutations of its underlying partition. Therefore 
Theorem~\ref{teo:teorema} is equivalent to
\[
S(N) = \sum_{k=1}^{N} \sum_{\substack{(\pi_1,\ldots,\pi_k)\\ 
\pi_i \geq 1,\ \sum_{i=1}^k \pi_i = N}} 
\prod_{i=1}^{k-1}(\pi_i + \pi_{i+1} - 1).
\]

\subsection{Transfer series}

For $m \geq 1$, define $F_m(x)$ as the generating function of ordered 
compositions ending with part $m$, weighted by the product of sliding 
factors:
\[
F_m(x) = \sum_{N \geq m} \left(
\sum_{\substack{(\pi_1,\ldots,\pi_k)\\ 
\pi_k = m,\ \sum \pi_i = N}} 
\prod_{i=1}^{k-1}(\pi_i+\pi_{i+1}-1)
\right) x^N.
\]
Each such composition is either the single-part sequence $(m)$, or a 
composition ending in some $l \geq 1$ followed by $m$, giving
\[
F_m(x) = x^m + x^m \sum_{l \geq 1}(l + m - 1)F_l(x).
\]
Defining
\[
S(x) = \sum_{m \geq 1} F_m(x), \qquad R(x) = \sum_{m \geq 1} m\, F_m(x),
\]
and using $\sum_{l\geq 1}(l+m-1)F_l(x) = R(x) + (m-1)S(x)$, we obtain
\[
F_m(x) = x^m\bigl(1 + R(x) + (m-1)S(x)\bigr). \tag{$*$}
\]

\subsection{Closing the system}

\textbf{Equation for $S(x)$.} Summing $(*)$ over $m \geq 1$ and using 
$\sum_{m\geq 1} x^m = \frac{x}{1-x}$ and 
$\sum_{m\geq 1} m x^m = \frac{x}{(1-x)^2}$:
\[
S = (1+R)\frac{x}{1-x} + S\frac{x^2}{(1-x)^2}.
\]
Collecting $S$:
\[
S = \frac{x(1-x)(1+R)}{1-2x}. \tag{1}
\]

\textbf{Equation for $R(x)$.} Multiplying $(*)$ by $m$, summing over 
$m \geq 1$, and using $\sum_{m\geq 1} m(m-1)x^m = \frac{2x^2}{(1-x)^3}$:
\[
R = \frac{x}{(1-x)^2}(1+R) + \frac{2x^2}{(1-x)^3}S. \tag{2}
\]

\textbf{Substitution.} From (1): 
$1+R = \frac{S(1-2x)}{x(1-x)}$.
Substituting into (2):
\[
\frac{S(1-2x)}{x(1-x)} - 1 
= \frac{S(1-2x)}{(1-x)^3} + \frac{2x^2}{(1-x)^3}S.
\]
Collecting $S$ over the common denominator $x(1-x)^3$ and expanding:
\begin{align*}
(1-2x)(1-x)^2 - x(1-2x+2x^2) 
&= 1-4x+5x^2-2x^3 - x+2x^2-2x^3 \\
&= 1-5x+7x^2-4x^3.
\end{align*}
Therefore $S(x)\cdot\frac{1-5x+7x^2-4x^3}{x(1-x)^3} = 1$, giving the 
following result.

\begin{theorem}[Generating Function]
\label{thm:gf}
The generating function of the number of row-convex polyominoes is
\[
G(x) = \sum_{N \geq 1} S(N)\, x^N = \frac{x(1-x)^3}{1-5x+7x^2-4x^3}.
\]
\end{theorem}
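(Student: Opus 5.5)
The plan is to turn the computation carried out just before the statement into a proof in the ring of formal power series $\mathbb{Z}[[x]]$, and then to check it against the first few values of $S(N)$.

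\textbf{Step 1: reduce to compositions.} I would start from Theorem~\ref{teo:teorema} in its composition form from the subsection on ordered compositions. The sum of $\Phi(\boldsymbol{\lambda})$ times the shift product over partitions is rewritten as a sum over ordered compositions $(\pi_1,\dots,\pi_k)$ of $N$, each weighted by $\prod_{i=1}^{k-1}(\pi_i+\pi_{i+1}-1)$. This rewriting is legitimate because each distinct rearrangement of a partition is exactly one composition. The shift product depends on the order of the parts, so I read Theorem~\ref{teo:teorema} as summing the product over each distinct ordering; this is the interpretation under which the formula is correct.

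\textbf{Step 2: set up the transfer system in $\mathbb{Z}[[x]]$.} I would define $F_m$ as above and note that $F_m\in x^m\mathbb{Z}[[x]]$. Hence the infinite sums $S=\sum_m F_m$, $R=\sum_m mF_m$ and $\sum_l (l+m-1)F_l$ all converge $x$-adically, and the interchanges of summation used to derive $(*)$ are valid. Splitting off the last part of a composition gives the recursion $F_m=x^m\bigl(1+\sum_{l\ge1}(l+m-1)F_l\bigr)$. This is the standard transfer-matrix argument, which works because the weight is a product of factors that each depend only on consecutive parts. Because the kernel $l+m-1$ is linear in $l$ and in $m$, everything collapses to the two unknowns $S$ and $R$, which yields $(*)$.

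\textbf{Step 3: solve the linear system.} Summing $(*)$ with the weights $1$ and $m$ gives equations (1) and (2). Both are linear in $S$ and $R$, with coefficients in $\mathbb{Q}(x)$ that are power series. I would eliminate $R$ as in the text and arrive at
\[
S\,(1-5x+7x^2-4x^3)=x(1-x)^3 .
\]
The constant term of $1-5x+7x^2-4x^3$ is $1$, so this series is invertible in $\mathbb{Z}[[x]]$. Therefore $S=G(x)$ is uniquely determined. The recursion for the coefficients of the $F_m$ is triangular in $N$, so the system has a unique solution, and no spurious root is introduced.

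\textbf{Step 4: sanity check.} I would expand the rational function and compare with direct counts. Using $a_N=5a_{N-1}-7a_{N-2}+4a_{N-3}+[x^N]\,x(1-x)^3$, the first coefficients are $1,2,6,19$. These agree with the composition sum: for $N=3$ it is $1+2+2+1=6$, and $N=4$ gives $19$, the classical count of column-convex polyominoes (Temperley).

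\textbf{Expected main obstacle.} The algebra in Step 3 is routine but easy to get wrong, in particular the identity $(1-2x)(1-x)^2-x(1-2x+2x^2)=1-5x+7x^2-4x^3$, so I would recheck it by the coefficient comparison in Step 4. The conceptual obstacle is Step 1: making sure that the weight $\prod(\pi_i+\pi_{i+1}-1)$ over compositions really counts row-convex polyominoes bijectively. Given a composition, the horizontal offset of each row relative to the row below ranges over exactly $\pi_i+\pi_{i+1}-1$ values that keep the two rows edge-adjacent. These offsets, together with the composition, determine the polyomino uniquely up to translation. I would make that bijection explicit rather than rely on the permutation-factor phrasing.
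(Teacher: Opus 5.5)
Your proposal is correct and is essentially the paper's own argument: pass to the composition form, set up the transfer series $F_m$, close the system in $S$ and $R$ via $(*)$ and equations (1) and (2), and eliminate $R$; your formal-power-series justification and coefficient check are welcome additions. Your Step~1 caveat is also well founded, since the literal formula of Theorem~\ref{teo:teorema} (the shift product evaluated on the decreasing partition, multiplied by $\Phi$) gives $17$ instead of $19$ at $N=4$, so the derivation really rests on the composition sum, which your row-offset bijection justifies directly.
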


\noindent This coincides with the generating function of column-convex 
polyominoes derived by Klarner \cite{klarner1965some} and Hickerson 
\cite{hickerson1999counting}, and together with 
Corollary~\ref{cor:rowcol} provides a complete proof that our 
combinatorial formula enumerates exactly the same objects. 
Table~\ref{tab:comparison} confirms this identity numerically for 
small $N$.

\begin{table}[h!]
\centering
\begin{tabular}{c|c|c}
$N$ & $S(N)$ This work & Hickerson \cite{hickerson1999counting} \\
\hline
1  & 1      & 1      \\
2  & 2      & 2      \\
3  & 6      & 6      \\
4  & 19     & 19     \\
5  & 61     & 61     \\
6  & 196    & 196    \\
7  & 629    & 629    \\
8  & 2017   & 2017   \\
9  & 6466   & 6466   \\
10 & 20727  & 20727  \\
11 & 66441  & 66441  \\
12 & 212980 & 212980 \\
\end{tabular}
\caption{Comparison of row-convex polyomino counts (including permutation 
factor $\Phi$) with Hickerson \cite{hickerson1999counting}.}
\label{tab:comparison}
\end{table}

\subsection{Asymptotic behavior}

\begin{theorem}[Asymptotic Growth]
\label{thm:asymptotic}
Let $S(N)$ denote the number of row-convex polyominoes with $N$ cells. 
Then
\[
S(N) \sim A \cdot 2^N \cos(N\theta + \phi), \quad N \to \infty,
\]
where $\theta = \arctan\!\left(\dfrac{\sqrt{7}}{3}\right)$ and the 
constants $A > 0$, $\phi \in \mathbb{R}$ are determined by the residues 
of $G(x)$ at the dominant poles.
\end{theorem}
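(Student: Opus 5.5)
The plan is to read the asymptotics directly off the rational generating function of Theorem~\ref{thm:gf} by partial fractions. First set $y=1/x$. The poles of $G(x)=x(1-x)^3/(1-5x+7x^2-4x^3)$ are the reciprocals of the roots of the reversed characteristic polynomial
\[
Q(y)=y^3-5y^2+7y-4 .
\]
Since the numerator has degree $4$ and the denominator degree $3$, $G$ splits into a polynomial of degree $1$ plus a proper rational part. That polynomial affects only $S(0)$ and $S(1)$. Hence, for $N\ge 2$, $S(N)$ satisfies the linear recurrence
\[
S(N)=5S(N-1)-7S(N-2)+4S(N-3),
\]
which can be checked against Table~\ref{tab:comparison}. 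We then have $S(N)=\sum_j c_j y_j^N$, summed over the roots $y_j$ of $Q$ (all simple, since $\gcd(Q,Q')=1$). Each constant $c_j$ equals minus the residue of $G(x)/x^{N+1}$ at $x=1/y_j$ divided appropriately, i.e.\ it is the residue of $G$ at $x=1/y_j$ rescaled by $-y_j$.

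The second step is to locate the roots so as to produce the stated shape. The target form dictates what to look for. The factor $2^N\cos(N\theta+\phi)$ with $\theta=\arctan(\sqrt7/3)$ corresponds to a conjugate pair
\[
y_\pm=2e^{\pm i\theta}=\tfrac{3\pm i\sqrt7}{2},
\]
the roots of $y^2-3y+4$, since $\cos\theta=3/4$ and $|y_\pm|=2$. I would attempt to divide $Q$ by $y^2-3y+4$ and identify the remaining linear factor $y-y_0$.

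Granting this, the conjugate pair contributes
\[
c\,y_+^N+\bar c\,y_-^N=2|c|\,2^N\cos(N\theta+\arg c),
\]
with $A=2|c|>0$ and $\phi=\arg c$. The constant $c$ is computed from the residue of $G$ at $x=1/y_+=(3-i\sqrt7)/8$, as the statement says. It is nonzero because the numerator $x(1-x)^3$ does not vanish there. The asymptotic equivalence then follows once the remaining term $c_0y_0^N$ is shown to be $o(2^N)$, i.e.\ once $|y_0|<2$.

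The main obstacle is precisely this root-location step: verifying that $y^2-3y+4$ divides $Q$, and that the complex pair, rather than a real root, is dominant. I would first test it by direct polynomial division. I would also test it with the numerical ratio $S(N)/S(N-1)$ from Table~\ref{tab:comparison}. That ratio must oscillate around a modulus of $2$ if the claim holds; if instead it settles to a constant, a real root dominates.

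The table already gives $629/196\approx 3.21$ and $212980/66441\approx 3.21$, so I expect trouble here. Direct division gives
\[
Q(y)=(y-2)(y^2-3y+4)+(-3y+4),
\]
so the division leaves a nonzero remainder. If it fails, the argument as structured cannot go through. The partial-fraction machinery above is still the correct proof template, but it yields whatever root of $Q$ has maximal modulus, and the conclusion must be stated with that root. So the decisive step I would carry out first is computing the roots of $Q$ exactly, via the cubic discriminant (which decides one real root versus three) and Cardano's formula, before committing to the values of $\theta$, $A$ and $\phi$.
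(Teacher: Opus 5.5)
Your proposal does not prove the statement, and it should not: the statement is false, and the paper's proof fails at exactly the root-location step you flag. Your template is the same as the paper's: read off the coefficients of the rational $G$ from its dominant simple poles. The difference is that the paper skips the check you insist on. It asserts that $x=1$ is a root of $4x^3-7x^2+5x-1$, but the value there is $4-7+5-1=1$. It also asserts that the remaining roots are $(3\pm i\sqrt7)/8$. Those are the roots of $4x^2-3x+1$, and $(x-1)(4x^2-3x+1)=4x^3-7x^2+4x-1$. So the paper has in effect replaced $5x$ by $4x$. Your nonzero remainder $-3y+4$ (equivalently $Q(y)=(y-1)(y^2-3y+4)-y^2$) detects precisely this slip.

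No choice of $A>0$ and $\phi$ can save the claim, for two reasons. First, the table shows $S(N)/2^N$ growing by a factor of about $1.6$ per step, while $A\cos(N\theta+\phi)\le A$. Second, $2\cos\theta=3/2$ is not an algebraic integer, so $\theta/\pi$ is irrational. Hence $\cos(N\theta+\phi)<0$ for infinitely many $N$, which is incompatible with $S(N)>0$.

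The computation you defer is short, and with it your partial-fraction argument proves the correct result. The discriminant of $Q(y)=y^3-5y^2+7y-4$ is $-59<0$, so $Q$ has one real root and one conjugate pair. Since $Q(3.2)=-0.032<0<Q(3.21)$, the real root is $\rho\approx 3.2056$. Vieta gives $|y_\pm|^2=4/\rho\approx 1.25$, so $|y_\pm|\approx 1.12<\rho$. Therefore $S(N)=c\,\rho^N+O(|y_\pm|^N)$ with $c=-\rho\,\mathrm{Res}(G,1/\rho)\approx 0.181>0$. This is purely exponential growth with base $\rho\approx 3.2056$, the classical growth constant of column-convex polyominoes, matching $212980/66441\approx 3.2056$. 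There is no oscillating factor in the leading term; the complex pair contributes only a correction of relative size $O\bigl((|y_\pm|/\rho)^N\bigr)$.

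One small correction to your setup. The closed form $S(N)=\sum_j c_jy_j^N$ does hold for $N\ge 2$, as you say. But the recurrence also involves $S(N-3)$, so it holds only for $N\ge 5$; at $N=4$ it gives $20\neq 19$.
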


\begin{proof}
The denominator of $G(x)$ satisfies $4x^3-7x^2+5x-1=0$. One root is 
$x=1$, which is removable since the numerator $x(1-x)^3$ vanishes there 
to order $3$. The remaining two roots are
\[
x_{2,3} = \frac{3 \pm i\sqrt{7}}{8},
\]
with modulus
\[
|x_{2,3}| = \frac{\sqrt{3^2+(\sqrt{7})^2}}{8} = \frac{\sqrt{16}}{8} 
= \frac{1}{2}.
\]
These are simple poles of $G(x)$ and constitute the dominant 
singularities. By the transfer lemma for simple poles 
\cite{flajolet2009analytic}, the coefficients satisfy
\[
[x^N] G(x) = 
2\,\mathrm{Re}\!\left[\mathrm{Res}(G, x_2)\cdot x_2^{-N-1}\right]
\sim A \cdot 2^N \cos(N\theta + \phi),
\]
where $\theta = \arg(x_2) = \arctan\!\left(\dfrac{\sqrt{7}}{3}\right)$ 
and $A$, $\phi$ are determined by the residue at $x_2$.
\end{proof}

\begin{remark}
The dominant growth rate is $2^N$, reflecting purely exponential growth 
with base $2$. This contrasts with the Hardy--Ramanujan estimate for 
unrestricted integer partitions, which grows sub-exponentially as 
$\exp\!\left(\pi\sqrt{2N/3}\right)$. The exponential growth of $S(N)$ 
arises from the sliding weights $(\pi_i + \pi_{i+1}-1)$, which dominate 
the partition enumeration.
\end{remark}

\paragraph{Implementation and Verification.}
Algorithm~\ref{alg:row_convex} implements the proposed formula in Python, 
systematically enumerating all integer partitions of $N$ and computing the 
corresponding products to obtain $S(N)$. The computed values have been 
compared with the known sequence of column-convex polyominoes listed in 
OEIS \href{https://oeis.org/A001169}{A001169}, confirming the correctness 
of the implementation.

\begin{algorithm}[H]
\caption{Computation of $S(N)$ for Row-Convex Polyominoes}
\label{alg:row_convex}
\begin{algorithmic}[1]
\Function{Partitions}{$n, \text{max}$}
    \If{$n = 0$} \State \Return \{ empty sequence \} \EndIf
    \If{$\text{max}$ is undefined or $\text{max} > n$} 
        \State $\text{max} \gets n$
    \EndIf
    \For{$i = \text{max}$ down to $1$}
        \For{each $\text{tail}$ in \Call{Partitions}{$n-i, i$}}
            \State \textbf{yield} $[i] + \text{tail}$
        \EndFor
    \EndFor
\EndFunction
\Function{DistinctPermutations}{$\text{seq}$}
    \State \Return all distinct permutations of $\text{seq}$
\EndFunction
\Function{$S_{\text{row-convex}}$}{$N$}
    \State $\text{total} \gets 0$
    \For{each partition $p$ in \Call{Partitions}{$N$}}
        \If{all elements in $p$ are equal}
            \State $\text{sequences} \gets \{ p \}$
        \Else
            \State $\text{sequences} \gets$ \Call{DistinctPermutations}{$p$}
        \EndIf
        \For{each sequence $\pi$ in $\text{sequences}$}
            \State $\text{prod} \gets 1$
            \For{$i = 1$ to $\text{length}(\pi)-1$}
                \State $\text{prod} \gets \text{prod} \cdot 
                (\pi_i + \pi_{i+1} - 1)$
            \EndFor
            \State $\text{total} \gets \text{total} + \text{prod}$
        \EndFor
    \EndFor
    \State \Return $\text{total}$
\EndFunction
\end{algorithmic}
\end{algorithm}

\section{Limitations and Extension to Concave Polyominoes}

\begin{definition}[Sub-Partition of a Polyomino Row]
Let $R$ be a row of a concave polyomino \cite{barequet2024totally} of 
length $L$. A \emph{sub-partition} of $R$ is a sequence of positive 
integers $\boldsymbol{\mu} = (\mu_1, \mu_2, \dots, \mu_k)$ with 
$\sum_{i=1}^{k} \mu_i \le L$, representing contiguous blocks of filled 
cells separated by gaps. Each $\mu_i$ corresponds to a maximal connected 
component in $R$.
\end{definition}

In convex polyominoes, each row is fully connected, reducing the 
enumeration to integer partitions with horizontal shifts. In concave 
polyominoes, rows can split into multiple components, and the allowed 
alignments must satisfy
\[
\forall i \in \{1, \dots, r-1\}, \quad 
\text{each block in } \boldsymbol{\mu}^{(i+1)} 
\text{ touches at least one block in } \boldsymbol{\mu}^{(i)}.
\]
Enumerating all configurations consistent with 4-connectivity is 
combinatorially intractable, so a direct generalization of the present 
approach to concave polyominoes is not currently feasible.

\section{Conclusions}

In this work, we have presented an alternative combinatorial method to 
enumerate row-convex polyominoes using the integer partition function. 
Each polyomino is associated with a sequence of row lengths forming a 
partition of its total area, and the product of consecutive row lengths 
accounts for all possible horizontal alignments. By including the 
permutation factor $\Phi(\boldsymbol{\lambda})$, we ensure that distinct 
arrangements of rows are properly counted.

The exact agreement with the classical enumeration of Hickerson 
\cite{hickerson1999counting} and Klarner \cite{klarner1965some} is fully 
explained by Theorem~\ref{thm:gf}, which establishes that the generating 
function derived from our combinatorial formula coincides identically with 
the known generating function of column-convex polyominoes. As a 
consequence, Theorem~\ref{thm:asymptotic} yields the exact asymptotic 
behavior $S(N) \sim A \cdot 2^N \cos(N\theta + \phi)$ with 
$\theta = \arctan(\sqrt{7}/3)$, confirming purely exponential growth with 
base $2$.

Furthermore, we discussed reductions in the total count via symmetry and 
rotations. Identifying mirror-image configurations allows for a reduction 
of the enumeration effort, and the row-column equivalence established in 
Corollary~\ref{cor:rowcol} confirms that the same growth applies to 
column-convex polyominoes.

Finally, we highlighted the limitations of extending this method to 
concave polyominoes, where the row structure decomposes into sub-partitions 
with variable gaps, making the 4-connectivity constraint combinatorially 
intractable.

\end{document}